\theoremstyle{definition}
\newtheorem{theorem}{Theorem}[section]
\newtheorem{lemma}[theorem]{Lemma}
\newtheorem{corollary}[theorem]{Corollary}
\newtheorem*{proposition*}{Proposition}
\theoremstyle{definition}
\newcommand{\R}{\mathbb{R}}
\newcommand{\Z}{\mathbb{Z}}
\newcommand{\id}{id}
\DeclareMathOperator{\Fix}{\mathsf{Fix}}
\DeclareMathOperator{\Diff}{\mathsf{Diff}}
\DeclareMathOperator{\Homeo}{\mathsf{Homeo}}
\DeclareMathOperator{\Osupp}{\mathsf{Osupp}}
\newcounter{notes}
\renewcommand{\paragraph}[1]{\medskip 
\noindent \textbf{#1}}
\title{There are no exotic actions of diffeomorphism groups on 1-manifolds}
\address{\newline Department of Mathematics   \newline California Institute of Technology   \newline Pasadena, CA 91125,  USA \newline
Department of Mathematics \newline  Cornell University  \newline Ithaca, NY 14853, USA}
\email{chenlei@caltech.edu, k.mann@cornell.edu}
\author{Lei Chen and Kathryn Mann}
\begin{document}

\maketitle

\begin{abstract}
Let $M$ be a manifold, $N$ a 1-dimensional manifold.  Assuming $r \neq \dim(M)+1$, we show that any nontrivial homomorphism $\rho: \Diff^r_c(M)\to \Homeo(N)$ has a standard form:  necessarily $M$ is $1$-dimensional, and there are countably many embeddings $\phi_i: M\to N$ with disjoint images such that the action of $\rho$ is conjugate (via the product of the $\phi_i$) to the diagonal action of $\Diff^r_c(M)$ on $M \times M \times ...$ on $\bigcup_i \phi_i(M)$, and trivial elsewhere.  This solves a conjecture of Matsumoto.  We also show that the groups $\Diff^r_c(M)$ have no countable index subgroups.
 
\end{abstract}
\section{Introduction}
Let $\Diff^r_c(M)$ denote the identity component of the group of compactly supported $C^r$ diffeomorphisms of a manifold $M$.  In this paper, we prove the following statement.  

\begin{theorem}\label{main}
Let $M$ be a connected manifold, and suppose that $\rho: \Diff^r_c(M) \to \Homeo(N)$ is a homomorphism, where $N = S^1$ or $N = \R$, $r \neq \dim(M)+1$.   Then $\dim(M) = 1$ and there are countably many disjoint embeddings $\phi_i : M \to N$ such that $\rho(g)|_{\phi_i(M)} = \phi_i g \phi_i^{-1}$ and $N - \bigcup_i \phi_i(M)$ is globally fixed by the action.  
\end{theorem}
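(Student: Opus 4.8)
The plan is to first peel off soft reductions and localize the problem to a single ``piece'' of $N$, then to carry out a local rigidity analysis of actions of compactly supported diffeomorphism groups of $\R^n$ on the line --- reconstructing the manifold (and forcing $\dim(M)=1$) from the order dynamics --- and finally to glue the local pictures back together.

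\textbf{Step 1: Reductions.} Since $r\neq\dim(M)+1$, the group $\Diff^r_c(M)$ is simple (Mather--Thurston), so $\rho$ is either trivial --- in which case the conclusion holds vacuously, with no embeddings --- or injective; and, being perfect, its image lies in $\Homeo_+(N)$. Let $F=\Fix(\rho)\subseteq N$ be the set of points fixed by every element of the image. It is closed, and since the endpoints of any component of $N\setminus F$ lie in $F$, each such component $I$ is $\rho$-invariant and carries a faithful action with no global fixed point; there are at most countably many such $I$. It therefore suffices to prove the following local statement: \emph{if $\Diff^r_c(M)$ acts faithfully on $\R$, or on $S^1$ with no global fixed point, then $\dim(M)=1$ and the action is topologically conjugate, via a diffeomorphism of $M$ onto $\R$ (resp.\ onto $S^1$), to the standard action.} Granting this, conjugating the restricted action on each component $I$ produces the embeddings $\phi_i\colon M\to N$, and then $N\setminus\bigcup_i\phi_i(M)=F$ is globally fixed. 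The circle-with-no-fixed-point case will be reduced to the line case at the end (by showing all rotation numbers vanish and passing to the minimal model / $\R$-cover), so from now on assume $N=\R$ and $\rho$ faithful without global fixed point.

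\textbf{Step 2: Localizing to balls and producing fixed points.} Fix a coordinate ball $B\subseteq M$, so $\Diff^r_c(B)\cong\Diff^r_c(\R^n)$ with $n=\dim(M)$, and similarly for finite disjoint unions of balls. Using that every element of such a group is a distortion element (Militon) together with the uniform perfectness of $\Diff^r_c(\R^n)$, one shows that every orbit of $\rho|_{\Diff^r_c(B)}$ on $\R$ is bounded; since a subgroup of $\Homeo_+(\R)$ with a bounded orbit fixes the supremum of its orbit-closure, $\rho(\Diff^r_c(B))$ has a global fixed point. Hence $\rho|_{\Diff^r_c(B)}$ is supported on a disjoint union of open intervals, on each of which the action is minimal (or, after collapsing the complementary gaps of a minimal set, semiconjugate to a minimal action). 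Since coordinate balls generate $\Diff^r_c(M)$ and the global action has no fixed point, $\rho|_{\Diff^r_c(B)}$ is nontrivial for some, hence (by simplicity) every, $B$.

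\textbf{Step 3: Local rigidity --- the main obstacle.} It remains to analyze a faithful, fixed-point-free action of $H:=\Diff^r_c(\R^n)$, and of its finite powers, on $\R$. The goal is to show every such action is \emph{standard}: by examining point stabilizers one locates, for each $x$ in the support, a large subgroup of $\mathrm{Stab}(x)$ of the form $\Diff^r_c(V)$ with $V$ the complement of a point of $\R^n$, and, assembling this data over all points, one obtains an identification of an open piece of $\R^n$ with an interval of $\R$ intertwining the two actions --- which forces $n=1$, since $\R^n$ is a $1$-manifold only then, and simultaneously yields the conjugating diffeomorphism. The tools are the lattice of subgroups $\Diff^r_c(U)$ ($U\subseteq\R^n$ open) with their inclusion, commutation (disjoint $U$) and conjugacy ($g\Diff^r_c(U)g^{-1}=\Diff^r_c(gU)$) relations, the simplicity and perfectness of $\Diff^r_c$, distortion, and the linear order of $\R$ (together with the triviality of the centralizer of a transitive such action). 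Excluding $n\ge 2$ is the delicate point: the extra room in $\R^n$ permits configurations of balls related by diffeomorphisms (a cyclic ``rotation'' of finitely many disjoint balls, or nested families) whose $\rho$-images must be fit into the one-dimensional order structure of $\R$ in ways one must rule out. This step is where essentially all of the genuine content of the theorem is concentrated.

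\textbf{Step 4: Globalizing.} The local conjugacies are unique (trivial centralizer of a transitive action), so over varying coordinate balls and varying minimal pieces they agree on overlaps and patch to embeddings $\phi_i\colon M\hookrightarrow\R$ with pairwise disjoint images, each conjugating $\rho$ on $\phi_i(M)$ to the standard action, with $\R\setminus\bigcup_i\phi_i(M)$ pointwise fixed; countability is automatic since the $\phi_i(M)$ are disjoint open intervals. Reassembling over the components of $N\setminus F$, and finally reducing the remaining $S^1$-with-no-fixed-point case to the line case as indicated, completes the proof. To summarize: Steps 1, 2 and 4 are comparatively formal; the whole weight of the argument --- recognizing a priori arbitrary actions of $\Diff^r_c(\R^n)$ and its powers on the line as standard, and in particular ruling out $n\ge 2$ --- rests on Step 3, which is where I expect the real difficulty to lie.
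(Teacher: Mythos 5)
There is a genuine gap: your Step 3, which you yourself describe as carrying ``essentially all of the genuine content of the theorem,'' is not an argument but a statement of intent. Phrases such as ``one shows,'' ``one locates,'' and ``one obtains an identification'' stand in for precisely the rigidity statement to be proved: that a faithful fixed-point-free action of $\Diff^r_c(\R^n)$ on the line forces $n=1$ and is conjugate to the standard action. No mechanism is offered for ruling out $n\geq 2$ beyond the remark that it is delicate, and no construction of the conjugating map is given. In addition, Step 2 rests on an unproved claim: that distortion (in a non-finitely-generated group, so the notion itself needs care) together with uniform perfectness forces every orbit of $\rho(\Diff^r_c(B))$ to be bounded. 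For actions by homeomorphisms of $\R$ this is not a standard consequence of distortion, and you give no argument; so even the localization step is incomplete as written.

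For comparison, the paper's route supplies exactly the missing mechanism. The key input is the small index property (Theorem \ref{smallindex}): $\Diff^r_c(M)$ has no proper countable-index subgroup, proved by a Rosendal-style infinite-composition argument plus fragmentation and a commutator trick. This forces any permutation of the countably many components of an invariant open set to be trivial, which yields Lemma \ref{lem:disjoint} (disjoint supports in $M$ give disjoint open supports in $N$, using simplicity and H\"older's theorem) and Corollary \ref{cor:countable_index} (no invariant open sets when there is no global fixed point). One then defines, for each $x\in M$, the sets $S_x=\bigcap_n\Osupp(\rho(G_{U_n}))$ and $T_x=\bigcap_n\Fix(\rho(G^{U_n}))$, shows $S_x=T_x\neq\emptyset$ with empty interior and pairwise disjoint, and uses the rightmost point of $S_y$ in a suitable component to define a map $\phi$ near $x$. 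Continuity and injectivity of $\phi$, together with invariance of domain, are what force $\dim(M)=1$ --- this is how the paper disposes of $n\geq 2$, without any case analysis of configurations of balls --- and topological transitivity then globalizes $\phi$. Your outline contains no analogue of the small index property or of the sets $S_x$, and without some such tool the central step remains unproven.
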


This proves \cite[Conjecture 1.3]{Matsumoto} and
generalizes works of Mann \cite{Mann_ETDS}, Militon \cite{Militon} and Matsumoto \cite{Matsumoto}, but with an independent proof.   Matsumoto's work  \cite{Matsumoto} proves an analogous result when the target is $\Diff^1(N)$ using rigidity theorems of \cite{BMNR} for solvable affine subgroups of $\Diff^1(\R)$.  This generalized \cite{Mann_ETDS}, which proved the result for homomorphisms to $\Diff^2(N)$ using Kopell's lemma.  Militon \cite{Militon} studies homomorphisms where the source is the group of {\em homeomorphisms} of $M$.   Our proof here is comparatively short, and is self-contained modulo the standard but difficult result that $\Diff^r_c(M)$, for $r \neq \dim(M) +1$ is a simple group.   Whether this holds for $r = \dim(M)+1$ is an open question; this is responsible for our restrictions on dimension in the statement. 

Theorem \ref{main} is already known in the case where $\rho$ is assumed to be continuous; it is a consequence of the orbit classification theorem of \cite{ChenMann}, and was likely known to others before.  In the case where the target is the group of smooth diffeomorphisms of $N$, this also follows from work of Hurtado \cite{Hurtado} who proves additionally that any such homomorphism is necessarily (weakly) continuous.  
Here we make no assumptions on continuity, however, our proof suggests that diffeomorphism groups exhibit ``automatic continuity"--like properties.  Specifically, we show the following {\em small index property}.  

\begin{theorem}[The small index property of $\Diff^r_0(M)$]\label{smallindex}
If $r \neq \dim(M) + 1$, then $\Diff_c^r(M)$ has no proper countable index subgroup.   Equivalently, $\Diff_c^r(M)$ has no nontrivial homomorphism to the permutation group $S_\infty$.
\end{theorem}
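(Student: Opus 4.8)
The plan is to prove Theorem~\ref{smallindex} by contradiction, exploiting the simplicity of $\Diff^r_c(M)$ together with the fragmentation property. Suppose $H \le \Diff^r_c(M)$ is a proper subgroup of countable index. The equivalence with the statement about $S_\infty$ is standard: a homomorphism to $S_\infty$ gives an action on a countable set, hence (picking a point with nontrivial orbit, which exists since the homomorphism is nontrivial) a point stabilizer of countable index; conversely a countable-index subgroup gives the permutation action on its countable coset space, which is nontrivial because the subgroup is proper and $\Diff^r_c(M)$ is simple (so the kernel of the coset action, being a proper normal subgroup contained in $H$, must be trivial). So I will work with $H$ directly.

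\medskip
\noindent The core idea is a Baire-category / open-ball covering argument. Cover $M$ by countably many open balls $\{B_n\}$ (or more precisely, open sets $U_n$ that are diffeomorphic to $\R^{\dim M}$ and whose "doubles" still embed nicely). For each $n$, the subgroup $G_n := \Diff^r_c(U_n) \le \Diff^r_c(M)$ of diffeomorphisms supported in $U_n$ meets $H$ in a subgroup $H \cap G_n$ of countable index in $G_n$. Now $G_n \cong \Diff^r_c(\R^{\dim M})$ is a \emph{perfect} (indeed simple) group, and more importantly it has the following \emph{uncountable divisibility / commutator width} feature that obstructs countable-index subgroups: using a compactly-supported flow or an infinite "telescoping" sequence of disjoint conjugate copies inside $U_n$, one can show $G_n$ is not the union of countably many cosets of any proper subgroup. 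Concretely, take a fixed $f \in G_n$ supported in a small ball $B \subset U_n$, and infinitely many diffeomorphisms $g_i$ moving $B$ to pairwise disjoint balls; then the $g_i f g_i^{-1}$ generate an infinite direct sum $\bigoplus \Z$ (or a copy of a group like $\Diff^r_c(\R)^{\oplus\infty}$) inside $G_n$, and such groups manifestly have no proper countable-index subgroups because, e.g., $\bigoplus_{i} \Z$ surjects onto $\prod_i \Z/2$ which is uncountable — wait, that's the wrong direction. The right statement: I will invoke (or reprove) the fact that $\Diff^r_c(\R^d)$ has the Bergman-type property / is \emph{strongly distorted} or at least "Cayley bounded + not a countable union of proper subgroups," which directly yields that every countable-index subgroup is everything. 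The cleanest route is the argument that $\Diff^r_c(\R^d)$ is a group in which every element is a commutator and which admits "infinite iteration": if $H \cap G_n$ had countable index, intersecting with the increasing union of the distorted cyclic/abelian subgroups forces $H \cap G_n = G_n$.

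\medskip
\noindent Granting that $H \supseteq G_n$ for every $n$, I then conclude by fragmentation: since $M$ is connected, $\Diff^r_c(M)$ is generated by the $G_n$ (every compactly supported diffeomorphism isotopic to the identity fragments into a product of diffeomorphisms each supported in one of the $U_n$, by the standard fragmentation lemma applied along a path). Hence $H = \Diff^r_c(M)$, contradicting properness. This completes the proof.

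\medskip
\noindent The main obstacle is the local step: showing that $\Diff^r_c(\R^d)$ (equivalently $G_n$) admits no proper subgroup of countable index, i.e.\ establishing the small index property in the model case of a ball. Everything else — the equivalence with $S_\infty$, passing to $H \cap G_n$, and the fragmentation gluing — is formal. For the model case I expect to need a genuinely group-theoretic dynamical input: an explicit "infinitely divisible" or "self-similar" structure. A natural candidate is the following: choose a diffeomorphism $\psi$ of the ball and a point $p$ such that the iterates $\psi^k$ push a fundamental domain to infinity, producing a $\Z$-action by which one can "absorb" countably many coset representatives at once; combined with simplicity (any countable-index subgroup contains a nontrivial normal subgroup? — no, that fails, so instead one uses the \emph{perfectness together with the displacement trick} à la the proof that $\Diff^r_c$ is uniformly perfect on balls). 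I anticipate that the correct tool, already available in the literature and likely cited by the authors, is that $\Diff^r_c(\R^d)$ is \emph{uniformly perfect} and \emph{boundedly generated by conjugates in a way that telescopes}, which is exactly what kills countable-index subgroups; formalizing this telescoping is the heart of the matter.
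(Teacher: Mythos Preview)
Your reduction is essentially circular: you reduce the small-index property for $\Diff^r_c(M)$ to the small-index property for each $G_n \cong \Diff^r_c(\R^d)$, but the latter is a special case of the very theorem you are trying to prove. Everything you label ``formal'' (the $S_\infty$ equivalence, passing to $H \cap G_n$, fragmentation) is indeed formal, and the paper uses fragmentation in the same way --- but you have deferred the entire content of the theorem to the ``model case'' and then not proved it.

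None of the tools you float for the model case actually delivers the conclusion. Uniform perfectness by itself does not rule out proper countable-index subgroups: $\mathrm{SL}_n(\R)$ is uniformly perfect yet embeds in $S_\infty$ (this is precisely the Thomas--Kallman result quoted in the paper as a contrast). A Bergman-type or strong-distortion property for $\Diff^r_c(\R^d)$ is not available in the literature in the generality you need, so ``invoke or reprove'' is not a plan. Your direct-sum/telescoping ideas are in the right spirit, but as you yourself noticed the first attempt goes the wrong direction, and you never produce a working version.

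The paper supplies exactly the missing mechanism, and it is concrete. One takes a sequence of disjoint balls $B_i$ inside a fixed ball $B$, shrinking to a point so that any sequence of sufficiently small $f_i \in G_{B_i}$ has a well-defined infinite product in $G$. A pigeonhole argument against the countable list of cosets $g_1H, g_2H,\ldots$ then forces some index $j$ for which every element of $G_{B_j}$ agrees on $B_j$ with the restriction of an element of $H$ supported in $B$. Running the same argument a second time with $B_j$ in place of $B$ yields a smaller ball $B' \subset B_j$ and, for each $f \in G_{B'}$, an element of $H$ supported in $B_j$ restricting to $f$. Now perfectness of $G_{B'}$ plus a commutator trick --- the two extensions have supports meeting only in $B'$, so the commutator computed in $H$ equals the commutator in $G_{B'}$ --- gives $G_{B'} \subset H$ outright. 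A short transitivity argument using one-parameter flows (a countable-index subgroup of $\R$ meets every open interval) propagates this to balls around every point, and fragmentation finishes. The key inputs you are missing are the infinite-product pigeonhole step, borrowed from automatic-continuity arguments, and the two-scale commutator trick that upgrades ``agrees with the restriction of an $H$-element'' to ``actually lies in $H$.'' Your phrase ``absorb countably many coset representatives at once'' is gesturing at the first of these, but it has to be executed, and the second is what makes it pay off.
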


This is in stark contrast with the case for finite dimensional Lie groups, where we have the following.   
\begin{theorem}[Thomas \cite{Thomas} and Kallman \cite{Kallman}]\label{TK}
There is an injective homomorphism $\mathrm{SL}_n(\R) \to S_\infty$.
\end{theorem}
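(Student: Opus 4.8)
The plan is to exhibit an injection $\mathrm{SL}_n(\mathbb{R}) \hookrightarrow S_\infty$ by producing a faithful action of $\mathrm{SL}_n(\mathbb{R})$ on a countably infinite set. Two harmless reductions come first. The block-diagonal embedding $g \mapsto \mathrm{diag}(g,1)$ realizes $\mathrm{SL}_n(\mathbb{R})$ as a subgroup of $\mathrm{SL}_{n+1}(\mathbb{R})$, so we may assume $n$ is odd; then the identity is the only scalar matrix in $\mathrm{SL}_n(\mathbb{R})$, which lets us ignore the center at the very end. Second --- this is the one ``magic'' input, but it is entirely soft --- by Steinitz's theorem $\overline{\mathbb{Q}_p}$ is isomorphic to $\mathbb{C}$ as an abstract field (both are algebraically closed of characteristic $0$ and of transcendence degree $2^{\aleph_0}$ over $\mathbb{Q}$), so there is a field embedding $\iota \colon \mathbb{R} \hookrightarrow \overline{\mathbb{Q}_p}$ and hence an injection $\iota_{*} \colon \mathrm{SL}_n(\mathbb{R}) \hookrightarrow \mathrm{SL}_n(\overline{\mathbb{Q}_p})$. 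It therefore suffices to build an action of $\mathrm{SL}_n(\overline{\mathbb{Q}_p})$ on a countable set whose kernel is the center.

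For this I would use that $\overline{\mathbb{Q}_p}$ is the directed union of its finite extensions $K_i$ of $\mathbb{Q}_p$, of which there are only countably many; fix an increasing chain $K_1 \subset K_2 \subset \cdots$ with union $\overline{\mathbb{Q}_p}$, and write $\mathcal{O}_i$ for the ring of integers of the local field $K_i$. Let $\mathcal{L}_i$ be the set of $\mathcal{O}_i$-lattices in $K_i^n$; since $\mathcal{L}_i \cong \mathrm{GL}_n(K_i)/\mathrm{GL}_n(\mathcal{O}_i)$ with $\mathrm{GL}_n(K_i)$ second countable and $\mathrm{GL}_n(\mathcal{O}_i)$ open, $\mathcal{L}_i$ is countably infinite. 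The group $\mathrm{SL}_n(K_i)$ acts on $\mathcal{L}_i$, and the kernel of this action is exactly the center $\mu_n(K_i) \cdot I$: a matrix fixing every lattice must be scalar with unit scalar, and a scalar matrix of determinant $1$ is automatically a root of unity. Extension of scalars $L \mapsto L \otimes_{\mathcal{O}_i} \mathcal{O}_j$ defines maps $\mathcal{L}_i \to \mathcal{L}_j$ that are $\mathrm{SL}_n(K_i)$-equivariant, compatible with further extension, and injective: $\mathcal{O}_j$ is faithfully flat over $\mathcal{O}_i$ and $\mathcal{O}_j \cap K_i = \mathcal{O}_i$, so $(L \otimes_{\mathcal{O}_i} \mathcal{O}_j) \cap K_i^n = L$. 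Forming the nested union $\mathcal{L}_\infty := \bigcup_i \mathcal{L}_i$ along these injections, we obtain a countably infinite set on which $\mathrm{SL}_n(\overline{\mathbb{Q}_p}) = \bigcup_i \mathrm{SL}_n(K_i)$ acts; an element acts trivially if and only if it is a scalar matrix, i.e.\ lies in the center $\mu_n(\overline{\mathbb{Q}_p}) \cdot I$.

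To conclude, restrict this action along $\iota_{*}$. An element of $\mathrm{SL}_n(\mathbb{R})$ in the kernel is then a scalar matrix $cI$ with $c \in \mathbb{R}$ and $c^n = 1$, and since $n$ is odd this forces $c = 1$. Hence $\mathrm{SL}_n(\mathbb{R})$ acts faithfully on $\mathcal{L}_\infty$ and so $\mathrm{SL}_n(\mathbb{R}) \hookrightarrow \mathrm{Sym}(\mathcal{L}_\infty) \cong S_\infty$. The step I expect to need the most care is the claim that $(\mathcal{L}_i)$ with the extension-of-scalars maps is genuinely a directed system of $\mathrm{SL}_n$-equivariant injections of countable sets --- essentially the functoriality of Bruhat--Tits buildings under field extension, which one verifies through the lattice description above; the remaining ingredients (the Steinitz isomorphism, the countability of the set of finite extensions of $\mathbb{Q}_p$, and the structure of local fields) are formal or standard. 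The same argument in fact works verbatim for $\mathrm{SL}_n$ over any field of cardinality at most $2^{\aleph_0}$, a special case of Kallman's general theorem.
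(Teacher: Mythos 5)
The paper does not actually prove this statement---it is simply quoted from Thomas and Kallman---and your argument is a correct reconstruction of essentially the standard cited proof: embed $\mathbb{R}$ into $\overline{\mathbb{Q}_p}$ via Steinitz's theorem, let $\mathrm{SL}_n$ act on the countable set of lattices over the rings of integers of an exhausting chain of finite extensions of $\mathbb{Q}_p$, and kill the center by reducing to odd $n$. The individual steps you flag all check out: the set of $\mathcal{O}_i$-lattices is countable since $\mathrm{GL}_n(\mathcal{O}_i)$ is open in the second countable group $\mathrm{GL}_n(K_i)$; extension of scalars is equivariant and injective because $\mathcal{O}_j\cap K_i=\mathcal{O}_i$; and the kernel of the lattice action is exactly the scalar matrices whose entries are roots of unity, which meets $\iota_*(\mathrm{SL}_n(\mathbb{R}))$ trivially for odd $n$.
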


Thus, one consequence of Theorem \ref{smallindex} and \ref{TK} is that there is no nontrivial homomorphism from $\Diff_c^r(M)$ into a linear group.   Of course, this is nearly immediate if one considers only continuous homomorphisms, since $\Diff_c^r(M)$ is infinite dimensional, and one may simply cite the invariance of domain theorem.  

If $G$ is a group with a non-open subgroup $H$ of countable index, then the action of $G$ on the coset space $G/H$ gives a discontinuous homomorphism to $S_\infty$.  This is one of very few known general recipes for producing discontinuous group homomorphisms (see \cite{Rosendal_BSL}), so gives some (weak) evidence that $\Diff_c^r(M)$ might have the automatic continuity property already known to hold for $\Homeo(M)$ by \cite{MannCont}, or for homomorphisms between groups of smooth diffeomorphisms as in \cite{Hurtado}.  

Theorem \ref{main} also gives new examples of left orderable groups that do not act on the line.  It is a well known fact that any {\em countable} group with a left-invariant total order admits a faithful homomorphism to $\Homeo_+(\R)$.   
For $r>0$, the groups $\Diff_c^r(\R^n)$ for $r>0$ are known to be left-orderable: the Thurston stability theorem \cite{Thurston} implies that they are locally indicable (any finitely generated subgroup surjects to $\Z$), which implies that they are left-orderable by the Burns-Hale theorem (\cite{BH}, see also \cite[Corollary 2]{Rolfsen}).   Thus, we have the following.  

\begin{corollary}
For $r >0$, the group $\Diff_c^r(\R^n)$ is left-orderable but has no faithful action on the line or the circle.
\end{corollary}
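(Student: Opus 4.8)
The plan is to deduce the Corollary directly from Theorem~\ref{main} together with the already-cited facts about left-orderability. First I would recall the standard correspondence: a countable group $G$ admits a faithful action on $\R$ by orientation-preserving homeomorphisms if and only if $G$ is left-orderable. Since $\Diff_c^r(\R^n)$ is typically uncountable, this equivalence does not apply verbatim, so the subtlety is that I must instead establish left-orderability by the route indicated in the excerpt. Concretely, by the Thurston stability theorem every finitely generated subgroup of $\Diff_c^r(\R^n)$ (for $r>0$) admits a nontrivial homomorphism to $\Z$, hence $\Diff_c^r(\R^n)$ is locally indicable; by Burns--Hale this implies $\Diff_c^r(\R^n)$ is left-orderable. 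This gives the first half of the statement.

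For the second half I would argue by contradiction: suppose $\Diff_c^r(\R^n)$ acted faithfully on $N=\R$ or $N=S^1$ by homeomorphisms, i.e.\ there were an injective homomorphism $\rho:\Diff_c^r(\R^n)\to\Homeo(N)$. Note $r>0$ automatically satisfies $r\neq\dim(\R^n)+1=n+1$ unless $r=n+1$; here I should be slightly careful and either restrict to $r\neq n+1$ or observe that the relevant case is covered — but since the corollary as stated is for general $r>0$, I would note that the intended reading is $r\neq n+1$, or simply invoke that the argument below uses Theorem~\ref{main} which requires that hypothesis. Granting this, Theorem~\ref{main} forces $\dim(\R^n)=1$, i.e.\ $n=1$, and moreover describes $\rho$ explicitly as a disjoint union of copies of the standard action conjugated by embeddings $\phi_i:\R\to N$, trivial off $\bigcup_i\phi_i(\R)$.

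The remaining step is to see that no such $\rho$ can be faithful, even when $n=1$. The action $\rho$ restricted to any single $\phi_i(\R)$ has kernel equal to the kernel of the standard action of $\Diff_c^r(\R)$ on $\R$, which is trivial; but the point is more subtle because we must rule out faithfulness of the \emph{whole} diagonal-type action. Here I would use simplicity of $\Diff_c^r(M)$ (for $r\neq\dim(M)+1$), the deep input the paper relies on: a homomorphism from a simple group is either trivial or injective on each "component," but the structural description already shows each $\rho(g)$ is determined by conjugation, so if there is at least one embedding $\phi_i$ the map is injective — hence this does \emph{not} immediately give the contradiction, and I have mis-stepped. The correct resolution is that Theorem~\ref{main} still leaves open that $\rho$ is faithful via such an action, so the genuine content must be that $n=1$ is already a contradiction with the corollary's claim about $\Diff_c^r(\R^n)$ for \emph{all} $n$: for $n\geq 2$ there is simply no faithful action at all since $\dim M = 1$ fails, while for $n=1$ one shows separately that $\Diff_c^r(\R)$ has no faithful action on $\R$ or $S^1$ because, e.g., it contains subgroups (like commuting diffeomorphisms with incompatible support behavior, via Kopell-type obstructions, or simply a subgroup not left-orderable) — but actually $\Diff_c^r(\R)$ \emph{is} left-orderable, so this too needs care. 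The main obstacle, then, is exactly pinning down the $n=1$ case: I would handle it by noting that a faithful $\rho:\Diff_c^r(\R)\to\Homeo(N)$ of the form given in Theorem~\ref{main} would exhibit $\Diff_c^r(\R)$ as acting, on at least one orbit copy, by the standard action — which is faithful and unproblematic — so in fact $n=1$ does \emph{not} yield a contradiction this way; consequently the corollary must really only be asserting non-existence of faithful actions for the cases where Theorem~\ref{main} gives an obstruction, namely when the structural form is incompatible with faithfulness, and I would track down precisely which $n$ and which of $N=\R,S^1$ these are (expecting: all $n\geq 2$ outright, and $n=1$ requiring an extra argument such as the failure of the group to embed in $\Homeo_+(\R)$ because it is not "dynamically small," e.g.\ using that $\Diff_c^r(\R)$ contains a copy of a group known to have no faithful action on $1$-manifolds). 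This last identification is the step I expect to be delicate and where I would spend the most effort.
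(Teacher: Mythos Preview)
Your treatment of left-orderability (Thurston stability $\Rightarrow$ local indicability $\Rightarrow$ left-orderable via Burns--Hale) is exactly what the paper does; that half is fine.

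The second half is where you go astray, and the reason is that you are trying to prove something false. The corollary is meant to be read with $n\ge 2$ (this is clear from the surrounding text, which advertises it as producing \emph{new} examples of left-orderable groups not acting on the line). For $n=1$ the statement simply fails: $\Diff_c^r(\R)$ acts faithfully on $\R$ by the identity embedding, and Theorem~\ref{main} describes exactly this action. All of your ``delicate'' final step --- searching for Kopell-type obstructions or subgroups with no $1$-manifold action inside $\Diff_c^r(\R)$ --- is chasing a claim that is not true and is not what the paper asserts.

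Once you restrict to $n\ge 2$, the paper's argument is a one-liner and much stronger than ``no faithful action'': Theorem~\ref{main} says any nontrivial $\rho:\Diff_c^r(M)\to\Homeo(N)$ forces $\dim M=1$, so for $M=\R^n$ with $n\ge 2$ there is no nontrivial homomorphism whatsoever, faithful or not. There is nothing further to check about the structure of the action on the $\phi_i$-images. (Your worry about the hypothesis $r\neq n+1$ is legitimate; the paper's statement tacitly inherits that restriction from Theorem~\ref{main}.)
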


\vskip 0.3cm 

The proof of Theorem \ref{smallindex} uses the idea from the first step of the proof of automatic continuity for homeomorphism groups of  \cite{MannCont}, following Rosendal \cite{R}. 
This result is then used to prove Theorem \ref{main} by constraining the supports and fixed sets of elements for the action on $N$.  We are then able to use this information to build a map from $M$ to $N$.  


\section{Proof of the small index property}
In this section, we prove Theorem \ref{smallindex}. The proof follows the local version of the arguments in \cite{MannCont} and \cite{R} for automatic continuity of $\Homeo_0(M)$.  

%

\begin{proof}
Let $M$ be a manifold and $r \neq \dim(M) +1$.  Let $G = \Diff^r_c(M)$, and for an open subset $U\subset M$, denote by $G_U$ the subgroup of $\Diff^r_c(M)$ consisting of elements whose support is compactly contained in $U$.   Thus, $G_U \cong \Diff^r_c(U).$
Suppose for contradiction that $H \subset G$ is a countable index subgroup.    We will show in step 1 that there is some ball $U$ in $M$ such that $G_U \subset H$.  After this, we will show that $H$ acts transitively on $M$, thus every $x \in M$ is contained in some open set $U_x$ such that $G_{U_x} \subset H$.  The {\em fragmentation property} gives that $\Diff^r_c(M)$ is generated by the union of such sets $G_{U_x}$ (this is true for any collection of sets $U_x$ which form an open cover of $M$, see \cite[Ch.1]{Banyaga}), so this is sufficient to prove $H =G$.  

\paragraph{Step 1.} Let $g_1H, g_2H, \ldots$ denote the left cosets of $H$.  Let $B \subset M$ be any ball, and take a sequence of disjoint balls $B_i \subset M$, with diameter tending to 0 and such that the sequence $B_i$ Hausdorff converges to a point.  
  
We first claim that there exists some $i$ such that every element $f\in G_{B_i}$ sufficiently close to the identity agrees with the restriction of an element $w_f \in g_iH$ to $B_i$.  Furthermore, we will have that $w_f$ is supported on $B$.   To prove this, let $U_i$ be an identity neighborhood of $G_{B_i}$, chosen small enough so that for any sequence of diffeomorphisms $f_i \in U_i$,  the infinite composition $\prod_i f_i$ is an element of $G$.  Equivalently, $f_i$ must converge to the identity in $G$ fast enough.    Supposing our claim is not true, we can find $h_i \in H$ such that the restriction of $g_i h_i$ to $B_i$ does not lie in $U_i$.  But $\prod_i (g_i h_i) \in G$ so lies in some coset $g_jH$ and restricts to $g_jh_j$ on each $B_j$, a contradiction.    We have in fact shown something stronger, for if $f \in G_{B_j}$, then $w_{\id}^{-1} w_f \in H$ and restricts to $f$ on $B_j$, so this shows that every element in $U_j$ agrees with the restriction of an element of $H$ to $B_j$.  Since $U_j$ generates $G_{B_j}$, we conclude that every element of $G_{B_j}$ agrees with the restriction of an element of $H$ to $B_j$.  

Now we use a commutator trick.  Apply the same argument as above using $B_j$ in place of $B$.  We find a smaller ball $B' \subset B_j$ such that every element $f \in G_{B'}$ agrees with the restriction to $B'$ of an element $v_f \in H$, and $v_f$ is supported on $B_j$. 
Since $\Diff_c(B')$ is perfect \cite{Anderson, Mather1, Mather2, Thurston_foliations}, any element $f \in \Diff_c(B')$ may be written as a product of commutators $f = \prod[a_i, b_i]$.  But $[a_i, b_i] = [v_{a_i}, w_{b_i}]$ since the supports of $v_{a_i}$ and $w_{b_i}$ intersect only in $B'$, and so $f = \prod [v_{a_i}, w_{b_i}] \in H$. 
This ends the proof of the first step.  

\paragraph{Step 2: transitivity.} 
To prove transitivity, let  $B'$ be the ball from step 1, and let $x \in B'$.  Suppose $y \in M$ is some point {\em not} in the orbit of $x$.  Let $f_t$ be a flow such that $f_t(y) \in B'$ for all $t \in (1,2)$.   Such a flow can be defined to have support on a neighborhood of a path from $x$ to $y$.  Since $B'$ lies in the orbit of $x$ under $H$, we have that $f_t \notin H$ for $t \in (1,2)$.   We know that  $H \cap \{ f_t : t \in \R\}$ is a countable index subgroup of $\{ f_t : t \in \R\} \cong \R$.  In particular, it must intersect every open interval of $\R$, this gives the desired contradiction.  
\end{proof}

As an immediate consequence, we can conclude that any fixed point free action of such a group on the line or circle is minimal. 

\begin{corollary} \label{cor:countable_index}
With the same restrictions on $r$ as above, if $\Diff^r_c(M)$ acts on $\R$ or $S^1$ without global fixed points, then there are no invariant open sets.  In particular the action has a dense orbit.  
\end{corollary}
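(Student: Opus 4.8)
The plan is to argue by contradiction: from a hypothetical proper nonempty invariant open set I will extract a global fixed point, using the small index property (Theorem~\ref{smallindex}) at two places to kill the only combinatorial actions that arise.

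Write $G=\Diff^r_c(M)$ and let $\rho\colon G\to \Homeo(N)$ be a homomorphism with no global fixed point, $N=\R$ or $S^1$. Suppose for contradiction that $U\subsetneq N$ is a nonempty $\rho(G)$-invariant open set. Since $N$ is a second countable $1$-manifold, each connected component of $U$ is an open interval or arc, and there are only countably many of them (disjoint nonempty open sets each contain a point of a fixed countable dense subset of $N$). The group $G$ permutes the countable set $\mathcal C$ of components of $U$, and since $\operatorname{Sym}(\mathcal C)$ embeds into $S_\infty$, Theorem~\ref{smallindex} forces the resulting homomorphism $G\to\operatorname{Sym}(\mathcal C)$ to be trivial. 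Hence every component of $U$ is individually $\rho(G)$-invariant.

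Now fix one component $I$ of $U$. It is open, nonempty, and properly contained in the connected space $N$, so it is not closed; thus its frontier $\partial I$ in $N$ is nonempty, and since $I$ is an open subinterval (or subarc) of a $1$-manifold, $\partial I$ has at most two points. As $I$ is invariant, so is the finite set $\partial I$, and applying Theorem~\ref{smallindex} again (to $\operatorname{Sym}(\partial I)\hookrightarrow S_\infty$) shows $G$ fixes $\partial I$ pointwise. Any point of $\partial I$ is then a global fixed point of $\rho$, contradicting the hypothesis. This proves there is no proper nonempty invariant open set. For the final assertion, note that for any $x\in N$ the closure $\overline{\rho(G)x}$ is closed and invariant, so its complement is an invariant open set, hence empty; thus $\overline{\rho(G)x}=N$, so in fact every orbit is dense, and in particular the action has a dense orbit.

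I do not expect a serious obstacle once Theorem~\ref{smallindex} is in hand: the argument is just a two-step reduction, first to the set of components and then to the endpoints of a component. The only points needing (routine) care are that the set of components is genuinely countable and that the frontier of a component is finite and nonempty; these are elementary facts about connected open subsets of $\R$ and $S^1$, and they subsume uniformly the cases that look slightly different on first glance, such as $N=\R$ with a half-line component or $N=S^1$ with $U=S^1\setminus\{\mathrm{pt}\}$.
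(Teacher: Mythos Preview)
Your proof is correct and follows essentially the same route as the paper: use Theorem~\ref{smallindex} to kill the permutation action on components of a putative invariant open set, then observe that the endpoints of any component are global fixed points. You are slightly more careful than the paper in invoking Theorem~\ref{smallindex} a second time to rule out swapping the two endpoints, and in spelling out why every orbit is dense; the paper's proof simply asserts these, so your version fills in details the paper leaves implicit.
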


\begin{proof}
Suppose the action has an invariant open set.  Then $\Diff^r_c(M)$ permutes the (countably many) connected components of $U$.  The stabilizer of an interval is a countable index subgroup, so by Theorem \ref{smallindex}, the permutation action is trivial.  Thus each interval is fixed and their endpoints are global fixed points.  
\end{proof}


\section{Proof of Theorem \ref{main}}
For the proof Theorem \ref{main}, we set the following notation.  As in the previous section we fix some $r \neq \dim(M)+1$ and when $U \subset M$ is an open set we denote by $G_U$ the set of elements of $\Diff^r_c(M)$ supported on $U$.   Also, $G^U \subset \Diff^r_c(M)$ denotes the set of elements that pointwise fix $U$.  The {\em open support} of a homoemorphism $g$ is the set $\Osupp(g) := N - \Fix(g)$; as is standard, the {\em support} of $g$ is defined to be the closure of $\Osupp(g)$.

\begin{proof}
We will assume the action on $N$ has no global fixed points, since if the action does have fixed points, then $N - \Fix(\rho)$ is a union of open intervals, each with a fixed-point free action of $\Diff^r_c(M)$, so it suffices to understand such actions.  In this case, we will show that there is a single homeomorphism $\phi:M \to N$ such that the action on $N$ is induced by conjugation by $\phi$.    

\begin{lemma} \label{lem:disjoint} 
For any action, if $U \cap V = \emptyset$, then $\Osupp(\rho(G_U)) \cap \Osupp(\rho(G_V)) = \emptyset$. 
\end{lemma}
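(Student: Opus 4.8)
The plan is to exploit the simplicity of $\Diff^r_c(M)$ together with a commutator argument. Suppose $U \cap V = \emptyset$, and suppose for contradiction that there is a point $p \in \Osupp(\rho(G_U)) \cap \Osupp(\rho(G_V))$. Since $p$ lies in the open support of $\rho(G_U)$, there is some $g \in G_U$ with $\rho(g)(p) \neq p$; choose a small open interval $J$ around $p$ so that $\rho(g)(J) \cap J = \emptyset$. The key point is that $G_U$ is a normal... no — rather, I will use that elements of $G_U$ and $G_V$ commute: since $U$ and $V$ are disjoint, $[G_U, G_V] = 1$ in $\Diff^r_c(M)$, hence $[\rho(G_U), \rho(G_V)] = 1$ in $\Homeo(N)$.

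First I would record this commuting observation, and then derive a constraint on the dynamics near $p$. Pick $h \in G_V$ with $\rho(h)(p) \neq p$. Because $\rho(g)$ and $\rho(h)$ commute, $\rho(h)$ must preserve the dynamical structure of $\rho(g)$: in particular $\rho(h)$ permutes the connected components of $\Osupp(\rho(g))$, and more usefully, $\rho(h)$ preserves $\Fix(\rho(g))$. Now the subtlety: I want to produce a contradiction from the fact that $\rho(g)$ moves $p$ while $\rho(h)$ also moves $p$, using that these maps commute and that $G_U$, $G_V$ are each perfect (indeed simple) so contain rich families of such elements. Concretely, choose $g_1 \in G_U$ whose open support, pushed forward, contains a neighborhood of $p$ on which $\rho(g_1)$ acts as a "contraction toward $p$" in the crude sense that iterates of some interval shrink; simultaneously $\rho(h)$ moves $p$ off itself. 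Conjugating $g_1$ by $h$ gives $\rho(h)\rho(g_1)\rho(h)^{-1} = \rho(hg_1h^{-1})$, and $hg_1h^{-1} \in G_U$ still (as $h$ is supported in $V$, disjoint from $U$, so $hg_1h^{-1} = g_1$). Thus $\rho(h)$ centralizes $\rho(g_1)$, forcing $\rho(h)$ to fix the unique point attracting the iterates, which is $p$ — contradicting $\rho(h)(p) \neq p$.

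So the cleaner route, which I would actually carry out, is: the centralizer of $\rho(g)$ in $\Homeo(N)$ fixes any point that is canonically attached to $\rho(g)$'s dynamics near $p$; by simplicity of $G_U$ one can choose $g \in G_U$ moving $p$ in a way that pins down $p$ (e.g. $p$ is an isolated point of $\mathrm{Fix}(\rho(g))$ that is a one-sided attractor, arranged using the flexibility of $\Diff^r_c(U)$), and then every element of $\rho(G_V) \subseteq C_{\Homeo(N)}(\rho(g))$ fixes $p$, contradicting $p \in \Osupp(\rho(G_V))$. The main obstacle is arranging that the chosen $g \in G_U$ has the needed local dynamics for $\rho(g)$ near $p$ — a priori $\rho$ is only a homomorphism, not continuous, so the global dynamics of $\rho(g)$ on $N$ could be wild. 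I expect one handles this by varying $g$ over all of $G_U$: if no single element suffices, one combines several commuting elements of $\rho(G_U)$ to cut out $p$ as a common fixed point of $\rho(G_V)$, using that $G_V$ commutes with the whole subgroup $\rho(G_U)$ and that $p$ lies in $\Osupp(\rho(G_U))$ means $\bigcap_{g \in G_U}\Fix(\rho(g))$ misses $p$ while $\rho(G_V)$ preserves $\Fix(\rho(g))$ for every such $g$. Pushing this through carefully — intersecting fixed sets and keeping track of which side of $p$ things move — is where the real work lies; the algebra ($[G_U,G_V]=1$ and perfectness) is immediate from disjointness.
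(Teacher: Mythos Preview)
Your proposal has a genuine gap. You correctly identify the starting point --- $[G_U,G_V]=1$ forces $\rho(G_V)$ to preserve the fixed set of each $\rho(g)$ with $g\in G_U$ --- but the plan to pin down $p$ as a dynamically canonical point of some $\rho(g)$ cannot be carried out. Since $\rho$ is an arbitrary homomorphism, you have no control whatsoever over the local dynamics of $\rho(g)$ near $p$: there is no reason $p$ should be an isolated attractor, or indeed have any distinguished relationship to $\Fix(\rho(g))$ beyond lying in its complement. Your fallback (``combine several commuting elements of $\rho(G_U)$'') does not help either: knowing that $\rho(G_V)$ preserves each $\Fix(\rho(g))$ only tells you $\rho(G_V)$ preserves $\Osupp(\rho(G_U))$ as a set, not that it fixes any particular point of it.

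The paper's argument supplies exactly the two tools you are missing. First, the small index property (Theorem~\ref{smallindex}) upgrades ``$\rho(G_V)$ preserves $\Osupp(\rho(G_U))$'' to ``$\rho(G_V)$ preserves each connected component $I$ of $\Osupp(\rho(G_U))$'', since permuting countably many components would give a nontrivial map $G_V\to S_\infty$. Second, on such an interval $I$, if $\rho(G_V)$ acts nontrivially then (by simplicity of $G_V$) faithfully, and since $G_V$ is nonabelian, H\"older's theorem forces some nontrivial $\rho(h)$, $h\in G_V$, to have a fixed point in $I$. Now run the same small-index argument with the roles reversed: $\rho(G_U)$ preserves the components of $I\setminus\Osupp(\rho(h))$, hence fixes their endpoints, producing a fixed point of $\rho(G_U)$ inside $I$ --- contradicting $I\subset\Osupp(\rho(G_U))$. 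Both the small index property and H\"older's theorem are doing essential work here that your commutator/centralizer heuristics do not replace.
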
 

\begin{proof} 
Since $G_U$ and $G_V$ commute, $\rho(G_V)$ preserves $\Osupp(\rho(G_U))$, permuting its connected components.  By Theorem \ref{smallindex}, this action is trivial.   Let $I$ be a connected component of $\Osupp(\rho(G_U))$.  Suppose $\rho(G_V)$ 
acts nontrivially on $I$.   Since $G_V$ is simple group, its action on $I$ is faithful.  Since $G_V$ is {\em not} abelian, H\"{o}lder's theorem implies that some nontrivial $\rho(g) \in \rho(G_V)$ acts with a fixed point.   But then $\rho(G_U)$ permutes the connected components of $I - \Osupp(\rho(g))$, and this permutation action is trivial.  Thus, $\rho(G_U)$ has a fixed point in $I$, contradicting that $I \subset \Osupp(\rho(G_U))$.
\end{proof}

Also, if $\bar U \cap \bar V = \emptyset$ then $G^U$ and $G^V$ generate $\Diff^r_c(M)$, so our assumption that there are no global fixed points for the aciton implies that $\Fix(\rho(G^U)) \cap \Fix(\rho(G^V) = \emptyset$ as well.
 
Our next goal is to define a map from $M$ to $N$. 
For each $x \in M$ pick a neighborhood basis $U_n$ of $x$ so $\bigcap_n U_n = \{x\}$.  
Let $S_x = \bigcap_n \Osupp(\rho(G_{U_n}))$ and let $T_x = \bigcap_n \Fix(\rho(G^{U_n}))$.  Note that this is independent of the choice of neighborhood basis.  

\begin{lemma} \label{lem:disjoint2}
If $x \neq y$, then $S_x \cap S_y = \emptyset$ and $T_x \cap T_y = \emptyset$.  Also, $S_x$ and $T_x$ have empty interior. 
\end{lemma}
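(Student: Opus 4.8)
The plan is to prove the three assertions of Lemma~\ref{lem:disjoint2} separately, using Lemma~\ref{lem:disjoint} and the commuting/disjointness structure already established. First, for the disjointness of $S_x$ and $S_y$ when $x \neq y$: choose neighborhood bases $U_n$ of $x$ and $V_n$ of $y$ that are eventually disjoint, i.e. there is some $N_0$ with $U_n \cap V_m = \emptyset$ for all $n,m \geq N_0$. Then $S_x \subset \Osupp(\rho(G_{U_n}))$ and $S_y \subset \Osupp(\rho(G_{V_m}))$, and by Lemma~\ref{lem:disjoint} these two open supports are disjoint; hence $S_x \cap S_y = \emptyset$. For the disjointness of $T_x$ and $T_y$, I would use the companion remark stated immediately after Lemma~\ref{lem:disjoint}: if $\bar U \cap \bar V = \emptyset$ then $\Fix(\rho(G^U)) \cap \Fix(\rho(G^V)) = \emptyset$. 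Pick neighborhood bases $U_n \ni x$, $V_n \ni y$ with $\bar U_n \cap \bar V_n = \emptyset$ for large $n$ (possible since $x \neq y$); then $T_x \cap T_y \subset \Fix(\rho(G^{U_n})) \cap \Fix(\rho(G^{V_n})) = \emptyset$.

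Next, the empty-interior claims. For $S_x$: suppose some open interval $J \subset N$ is contained in $S_x$. Pick a point $y \neq x$ in $M$ and a neighborhood basis $V_n$ of $y$; since $y \neq x$ we may take the $V_n$ eventually disjoint from the $U_n$ defining $S_x$. By Lemma~\ref{lem:disjoint}, $\Osupp(\rho(G_{V_n}))$ is disjoint from $\Osupp(\rho(G_{U_n})) \supset J$, so $\rho(G_{V_n})$ fixes $J$ pointwise. Now observe that the subgroups $G_{V_n}$, as $y$ ranges over a suitable finite or countable collection of points and $n$ is large, generate $\Diff^r_c(M)$ by the fragmentation property (any open cover of $M$ by such sets works, cf.\ \cite[Ch.1]{Banyaga}); more simply, I can pick finitely many points $y_1,\dots,y_k$ distinct from $x$ whose small neighborhoods $V^{(j)}$ cover $M$, each disjoint from a fixed small $U_n \ni x$ — but since $x$ itself must be covered, I instead argue: the open sets of the form $M \setminus \{x\}$-neighborhoods together with one small neighborhood of $x$ cover $M$, and the corresponding $G$'s generate. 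To get the contradiction I only need that $J \subset \Fix(\rho(G_W))$ for every $W$ with $x \notin \bar W$; taking the union of all such $W$, which is $M \setminus \{x\}$, and using fragmentation for $\Diff^r_c(M \setminus \{x\})$ (which surjects appropriately), I conclude $\rho(\Diff^r_c(M \setminus \{x\}))$ fixes $J$. But $G^{U_n} \supset$ elements arbitrarily close to... — cleaner: since $J \subset S_x \subset \Osupp(\rho(G_{U_n}))$ for every $n$, and $S_x \cap S_y = \emptyset$ for all $y \neq x$, the interval $J$ meets no $S_y$; one then derives that $\rho(\Diff^r_c(M))$ as a whole cannot move $J$ off itself while some element of $G_{U_n}$ must, producing a contradiction with minimality from Corollary~\ref{cor:countable_index}. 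For $T_x$: an entirely parallel argument, using instead that $G^{U_n}$ for small $n$ together with $G^{V}$ for $\bar V \cap \bar U_n = \emptyset$ generate $\Diff^r_c(M)$, and the companion fixed-point statement.

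I expect the empty-interior part to be the main obstacle, since the disjointness statements follow almost formally from Lemma~\ref{lem:disjoint} and its fixed-set analogue, whereas ruling out an interval in $S_x$ (or $T_x$) requires combining fragmentation, the fact that all other $S_y$ avoid this interval, and minimality of the action (Corollary~\ref{cor:countable_index}) to force a contradiction: concretely, if $J \subset S_x$ then $J$ is fixed by $\rho(G_W)$ for every $W$ bounded away from $x$, these generate a subgroup whose $\rho$-image still acts minimally on $N$ (or whose closure of orbits covers $N$) unless $J$ is globally fixed, contradicting that $\rho(G_{U_n})$ acts nontrivially on $J$. The delicate point is handling the single excluded point $x$ correctly in the fragmentation argument — one must ensure the generating sets genuinely omit a neighborhood of $x$ while still generating all of $\Diff^r_c(M)$, which works because $\{x\}$ has empty interior and $\Diff^r_c(M \setminus \{x\})$ is dense in the relevant sense, or by directly covering $M$ with two sets, one a small ball around $x$ and finitely many others avoiding an even smaller ball around $x$, then playing the two families against each other.
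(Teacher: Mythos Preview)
Your disjointness arguments for $S_x \cap S_y = \emptyset$ and $T_x \cap T_y = \emptyset$ are correct and match the paper's reasoning exactly: both follow immediately from Lemma~\ref{lem:disjoint} and its fixed-set companion by choosing basic neighborhoods of $x$ and $y$ with disjoint closures.

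For the empty-interior claim, however, you are working far too hard, and your argument is not actually completed --- you yourself flag the ``delicate point'' of handling the excluded point $x$ in the fragmentation argument, and none of the sketches you give close that gap. The obstacle is real: the subgroups $G_W$ with $x \notin \bar W$ do \emph{not} generate all of $\Diff^r_c(M)$, so you cannot conclude that $J$ is globally fixed, and your attempted workaround via minimality (``$\rho(\Diff^r_c(M))$ cannot move $J$ off itself'') is not justified by anything you have established.

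The paper sidesteps all of this with a one-line equivariance observation: if $g(x)=y$ then $gG_{U_n}g^{-1}=G_{g(U_n)}$, and since $g(U_n)$ is a neighborhood basis of $y$ one gets
\[
\rho(g)S_x=\bigcap_n \Osupp(\rho(gG_{U_n}g^{-1}))=\bigcap_n \Osupp(\rho(G_{g(U_n)}))=S_y,
\]
and similarly $\rho(g)T_x=T_y$. Since $\Diff^r_c(M)$ acts transitively on $M$, the sets $\{S_x : x\in M\}$ are pairwise homeomorphic and, by the first part of the lemma, pairwise disjoint. If one of them had nonempty interior they all would, yielding uncountably many pairwise disjoint nonempty open subsets of $N$, which is impossible since $N$ is second countable. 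The same argument handles $T_x$. This equivariance $\rho(g)S_x=S_{g(x)}$ is in any case needed later in the paper to build the conjugating map $\phi$, so it is worth isolating here.
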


\begin{proof} 
The first assertion follows immediately from Lemma \ref{lem:disjoint} and the second because $T_x \cap T_y$ would be globally fixed by $\rho$ by our observation above.  
Furthermore, if $g(x) = y$, then it follows from the definition that 
\[ \rho(g)S_x = \bigcap_n \rho(g) \Osupp(\rho(G_{U_n})) = \bigcap_n  \Osupp(\rho(g) \rho(G_{U_n}) \rho(g)^{-1})  = \bigcap_n  \Osupp(\rho(G_{g(U_n)}).
 \]
Thus, $\rho(g)S_x = S_y$.  
Similarly we have $T_y = \rho(g)T_x$.  Thus, if some $S_x$ has nonempty interior, disjointness of $S_x$ and $S_y$ would give an uncountable family of disjoint open sets in $N$, a contradiction.   The same applies to the sets $T_x$.  
\end{proof}  

We next prove these sets, though defined differently, are in fact the same.  
\begin{lemma}
$S_x = T_x$
\end{lemma}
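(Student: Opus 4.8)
The plan is to prove the two inclusions $S_x \subseteq T_x$ and $T_x \subseteq S_x$ separately, in each case reducing to the disjointness lemma (Lemma~\ref{lem:disjoint}), and for the second inclusion additionally invoking the fragmentation property together with the standing assumption that the action on $N$ has no global fixed point. As a preliminary step, since (as already noted) $S_x$ and $T_x$ do not depend on the chosen neighborhood basis, I would fix a \emph{nested} basis with $\overline{U_{n+1}} \subseteq U_n$ for every $n$; this is what lets one pass freely between $U_n$, its closure, and the next basis element. Observe also that both $\Osupp(\rho(G_{U_n}))$ and $\Fix(\rho(G^{U_n}))$ are then decreasing sequences of subsets of $N$.

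For $S_x \subseteq T_x$, fix $p \in S_x$, an index $n$, and an arbitrary $g \in G^{U_n}$; the goal is $\rho(g)(p) = p$. Since $g$ pointwise fixes $U_n$, its (compact) support is contained in the closed set $M - U_n$, hence in $W := M - \overline{U_{n+1}}$, so $g \in G_W$. Now $U_{n+1} \cap W = \emptyset$, so Lemma~\ref{lem:disjoint} gives $\Osupp(\rho(G_{U_{n+1}})) \cap \Osupp(\rho(G_W)) = \emptyset$. Since $p \in S_x \subseteq \Osupp(\rho(G_{U_{n+1}}))$ while $\Osupp(\rho(g)) \subseteq \Osupp(\rho(G_W))$, we conclude $p \notin \Osupp(\rho(g))$, i.e.\ $\rho(g)(p) = p$. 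As $g$ and $n$ were arbitrary, $p \in \bigcap_n \Fix(\rho(G^{U_n})) = T_x$.

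For $T_x \subseteq S_x$, fix $q \in T_x$ and suppose for contradiction that $q \notin S_x$; then there is some $n$ with $q \notin \Osupp(\rho(G_{U_n}))$, i.e.\ $\rho(G_{U_n})$ fixes $q$. Because $\overline{U_{n+1}} \subseteq U_n$, the sets $U_n$ and $M - \overline{U_{n+1}}$ form an open cover of $M$, so by the fragmentation property $\Diff^r_c(M)$ is generated by $G_{U_n} \cup G_{M - \overline{U_{n+1}}}$. But $G_{M - \overline{U_{n+1}}} \subseteq G^{U_{n+1}}$ and $q \in T_x \subseteq \Fix(\rho(G^{U_{n+1}}))$, so $\rho$ fixes $q$ on each of these two generating subsets, hence on all of $\Diff^r_c(M)$. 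This makes $q$ a global fixed point of the action, contradicting our standing hypothesis. Therefore $q \in S_x$, and the two inclusions give $S_x = T_x$.

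I expect the second inclusion to be the main obstacle: it is the only place where the no-global-fixed-point reduction is genuinely used (the statement would fail without it), and it relies on choosing the auxiliary open set $M - \overline{U_{n+1}}$ precisely so that, together with $U_n$, it covers $M$ and the fragmentation property applies. The first inclusion is essentially a direct application of Lemma~\ref{lem:disjoint}, once one is careful that $g \in G^{U_n}$ has support off $\overline{U_{n+1}}$ --- which is exactly where the nestedness of the basis is needed.
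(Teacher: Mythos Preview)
Your proof is correct and follows essentially the same route as the paper's own argument: both inclusions use exactly the same ingredients (Lemma~\ref{lem:disjoint} for $S_x\subseteq T_x$, and fragmentation plus the no-global-fixed-point hypothesis for $T_x\subseteq S_x$), and your choice to make the nesting $\overline{U_{n+1}}\subseteq U_n$ explicit is precisely what the paper uses tacitly when writing $G^{U_n}\subset G_{M-\overline{U_{n+1}}}$ and when invoking that $G_{U_n}$ and $G^{U_{n+1}}$ generate.
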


\begin{proof}  Fix $x$ and let $U_n$ be a neighborhood basis of $x$.  Since $G^{U_n} \subset G_{N-\overline{U_{n+1}}}$ and since $N-\overline{U_n}$ and $U_n$ are disjoint, by Lemma \ref{lem:disjoint}, we have that 
\[
\Osupp(\rho(G_{U_{n+1}})=N-\Fix(\rho(G_{U_{n+1}})) \subset \Fix(\rho(G^{U_n}).\]

Thus $S_x \subset T_x$.  For the reverse inclusion, suppose $z \in T_x - S_x$.  Then $z \notin \Osupp(\rho(G_{U_n}))$ for some $n$; i.e.,  $z\in \Fix(\rho(G_{U_n}))
$.  Also $z\in \Fix(\rho(G^{U_{n+1}}))$ by the definition of $T_x$. But $G_{U_n}$ and $G^{U_{n+1}}$ together generate $\Diff^r_c(M)$ (this again is the {\em fragmentation property}), so this implies that $z$ is a global fixed point.  
\end{proof}  
\begin{lemma}
$S_x$ is nonempty. 
\end{lemma}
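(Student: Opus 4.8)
The plan is to rewrite $S_x$ as a decreasing intersection of nonempty closed sets — which settles $N=S^1$ by compactness — and then to reduce the case $N=\R$ to a compact one by compactifying. Set $K_n:=\supp(\rho(G_{U_n}))$, and (passing to a subsequence) assume $\overline{U_{n+1}}\subset U_n$. First, each $K_n$ is nonempty: by fragmentation $\Diff^r_c(M)$ is generated by the subgroups $G_B$ with $B$ an embedded ball, and since $M$ is connected any two such subgroups can be conjugated into one another by an element of $\Diff^r_c(M)$ (shrink one ball and push it inside the other along a path); as $\rho$ is nontrivial, $\rho(G_{U_n})$ is nontrivial, and since $G_{U_n}\cong\Diff^r_c(U_n)$ is simple, $\rho|_{G_{U_n}}$ is faithful, so $\Osupp(\rho(G_{U_n}))\neq\emptyset$ and $K_n\neq\emptyset$. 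Next I claim $S_x=\bigcap_n K_n$. The inclusion $S_x\subseteq\bigcap_n K_n$ is immediate. For the reverse, the argument of the previous lemma gives $K_{n+1}\subseteq\Fix(\rho(G^{U_n}))$, and in addition $\Fix(\rho(G^{U_n}))\subseteq\Osupp(\rho(G_{U_{n-1}}))$: the sets $U_{n-1}$ and $M-\overline{U_n}$ cover $M$, so by fragmentation $G_{U_{n-1}}$ and $G^{U_n}\supseteq G_{M-\overline{U_n}}$ generate $\Diff^r_c(M)$, whence any common fixed point of $\rho(G_{U_{n-1}})$ and $\rho(G^{U_n})$ is a global fixed point of $\rho$, which is excluded. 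Thus $K_{n+1}\subseteq\Fix(\rho(G^{U_n}))\subseteq\Osupp(\rho(G_{U_{n-1}}))\subseteq K_{n-1}$, and intersecting gives $\bigcap_n K_n=\bigcap_n\Fix(\rho(G^{U_n}))=T_x=S_x$. So $S_x$ is the intersection of a decreasing sequence of nonempty closed sets, with the extra feature that $K_{n+1}$ lies inside the open set $\Osupp(\rho(G_{U_{n-1}}))$. In particular, if $N=S^1$ we are done, since a decreasing sequence of nonempty closed subsets of a compact space has nonempty intersection.

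For $N=\R$ one must rule out the $K_n$ escaping to infinity, and for this I pass to a compactification. Since $\Diff^r_c(M)$ is simple, the sign-of-orientation homomorphism $\Diff^r_c(M)\to\Z/2$ is trivial, so $\rho$ takes values in $\Homeo_+(\R)$; hence every $\rho(g)$ extends to the one-point compactification $\hat N=\R\cup\{\infty\}\cong S^1$, fixing $\infty$, giving $\hat\rho\colon\Diff^r_c(M)\to\Homeo(\hat N)$. Running the same construction for $\hat\rho$ produces $\hat S_x:=\bigcap_n\supp_{\hat N}(\rho(G_{U_n}))$, which is nonempty (compactness), satisfies $\hat S_x\cap\R=S_x$, and is equivariant: $\hat\rho(g)\,\hat S_x=\hat S_{g(x)}$. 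Suppose $S_x=\emptyset$; then $\hat S_x\subseteq\{\infty\}$, so $\hat S_x=\{\infty\}$, and by equivariance together with transitivity of $\Diff^r_c(M)$ on $M$ this holds for every $y$. Unravelling the definitions, this says precisely that $\Osupp(\rho(G_B))$ is unbounded in $\R$ for every embedded ball $B\subset M$.

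\textbf{The main obstacle} is to contradict this last configuration. The key tool is a strengthening of Lemma \ref{lem:disjoint}: if $U\cap V=\emptyset$ then $\rho(G_V)$ fixes $\supp(\rho(G_U))$ pointwise — indeed $\rho(G_V)$ commutes with $\rho(G_U)$ and so permutes the components of $\Osupp(\rho(G_U))$; by the small index property of $G_V\cong\Diff^r_c(V)$ this permutation is trivial, and on each component Lemma \ref{lem:disjoint} forbids $\rho(G_V)$ from moving a point, so $\rho(G_V)$ fixes $\Osupp(\rho(G_U))$, hence its closure, pointwise. Applying this with a ball $V$ disjoint from a ball $U$: if $\Osupp(\rho(G_U))$ had an unbounded component $(a,\infty)$, then $\rho(G_V)$ would fix $[a,\infty)$ pointwise and $\Osupp(\rho(G_V))$ would be bounded above, contrary to the configuration above; so no $\Osupp(\rho(G_B))$ contains an unbounded component, and hence each has infinitely many components accumulating at $\infty$. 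Playing this against the refined disjointness (which makes the open supports of distinct ball-subgroups separated, not merely disjoint) and against the minimality of the action from Corollary \ref{cor:countable_index} (so that the translates $\Osupp(\rho(G_{g(B)}))$ cover $\R$) yields the contradiction, completing the proof that $S_x\neq\emptyset$. I expect this final topological bookkeeping near $\infty$ to be the only delicate step; everything preceding it is formal.
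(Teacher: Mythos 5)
Your first half is fine and essentially reproduces the paper's reasoning: writing $S_x$ as the nested intersection of the closed supports, identifying it with $T_x$ via the two fragmentation inclusions, and invoking compactness settles $N=S^1$ exactly as the paper does (the nonemptiness of each $\supp(\rho(G_{U_n}))$ via simplicity/faithfulness is also fine). The problem is the case $N=\R$, which is the only substantive content of this lemma, and there your proof stops exactly where the real work begins. After reducing to the configuration ``$\hat S_y=\{\infty\}$ for all $y$, i.e.\ every ball subgroup has open support accumulating only at $\infty$,'' you write that playing the refined disjointness against minimality ``yields the contradiction'' and defer the ``topological bookkeeping near $\infty$.'' That bookkeeping is not a routine verification: pairwise disjoint open subsets of $\R$, each a union of intervals marching off to infinity, with every orbit of such a set dense, is not an obviously contradictory picture, and neither Lemma \ref{lem:disjoint} (even in your ``pointwise-fixed closure'' form) nor Corollary \ref{cor:countable_index} visibly rules it out. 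There is also a smaller slip en route: using the one-point compactification, $\infty\in\hat S_y$ only says the supports are unbounded above \emph{or} below, so ``$\Osupp(\rho(G_U))\supseteq(a,\infty)$ forces $\Osupp(\rho(G_V))$ bounded above'' does not yet contradict your standing hypothesis.

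The missing ingredient is fragmentation over a compact exhaustion of $M$, which is how the paper closes this case, and you already have everything needed for it. From $\hat S_y=\{\infty\}$ and the nestedness of the closed supports you get: for every compact $K\subset\R$ and every $y\in M$ there is a neighborhood $U(y)$ with $\Osupp(\rho(G_{U(y)}))\cap K=\emptyset$. Now take any $g\in\Diff^r_c(M)$; its support lies in some compact $A\subset M$, which is covered by finitely many such $U(y_i)$. By fragmentation, $g$ is a product of diffeomorphisms supported in the $U(y_i)$, each of whose images under $\rho$ fixes $K$ pointwise; hence $\rho(g)$ fixes $K$ pointwise. Since $g$ and $K$ were arbitrary, every point of $\R$ is a global fixed point of $\rho$, contradicting the standing assumption. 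Replacing your sketched endgame near $\infty$ with this argument (or simply running it directly, without the compactification) completes the proof; as written, the proposal has a genuine gap at the decisive step.
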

\begin{proof}
If the action is on $S^1$, this follows immediately since $S_x = T_x$ is the intersection of nested, nonempty closed sets.  If $N = \R$, the same is true provided that $\Fix(\rho(G^{U_n}))$, (or equivalently $\Osupp(\rho(G_{U_n})$), does not leave every compact set as $n \to \infty$.  
Supposing for contradiction that this is true, this means that for each $x \in M$ and compact $K \subset \R$ there is a neighborhood $U(x)$ such that $\Osupp(\rho(G_{U(x)})) \cap K = \emptyset$.  Fix any compact subset $A \subset M$.  Then finitely many of these neighborhoods $U(x)$, for $n \in A$ cover $A$.  But then the union of these finitely many subgroups $G_{U(x)}$ generate $\Diff_c(A) \subset \Diff_c(M)$, hence $K \cap \Osupp(\Diff_c(A)) = \emptyset$.  Since every element of $\Diff_c(M)$ lies in $\Diff_c(A)$ for some compact subset $A$ of $M$, we conclude that  $K \cap \Osupp(\Diff_c(M)) = \emptyset$ contradicting that $\rho$ had no global fixed points in $N$.  
\end{proof}

\paragraph{Construction of $\phi$.}
To finish the proof, we wish to show that $S_x$ is a singleton, and the assignment $\phi: x \mapsto S_x$ is a homeomorphism conjugating $\rho$ with the standard action of $\Diff(M)$ on $M$.   We will actually show first that $x \mapsto S_x$ is a local homeomorphism,  use this to {\em conclude} that $S_x$ is discrete, and proceed from there.  

Let $I = (a,b)$ be a connected component of $N - S_x$, chosen so that $a \neq -\infty$ if $N = \R$.  If $N = S^1$ and $S_x$ is a singleton, it is possible that both ``endpoints'' of this interval agree.  For simplicity, we treat the case where $a \neq b$, the case $a = b$ on the circle can be handled with exactly the same strategy, and in fact the argument simplifies quite a bit since $S_x$ is already a singleton.  
Let $U$ be a neighborhood of $x$ small enough so that $b \notin \Osupp(\rho(G_U))$; this is possible by definition of $S_x$.   Then for each $g \in G_U$, $g(a)< b$.  

\paragraph{Step 1: definition of $\phi$ locally}

Note that $a$ is not accumulated from the right in $S_x$. For any $n$, denote by $O_n$ the connected component of $\Osupp(\rho(G_{U_n}))$ that contains $a$. Since $a$ cannot be accumulated to the right by points of $S_x$ (i.e. on at least the right side it looks like an isolated point of $S_x$), there exists $k \in \mathbb{N}$ such that $a$ is the right most point of $S_x\cap O_k$.   We claim that, for $y\in U_k$, the set $S_y\cap O_k$ also has a rightmost point, in which case we define $\phi(y)$ to be this rightmost point.

To see such a rightmost point exists, take $g\in G_{U_k}$ with $g(x)=y$. Then $\rho(g)(S_x)=S_y$. Since $\rho(g)$ fixes endpoints of $O_k$ by definition, we know that $\rho(g)(a)\in S_y$, which is also the rightmost point of $S_y\cap O_k$.  Thus, $\phi$ is a well defined function on $O_k$ with image contained in $U_k$.    
An equivalent definition of $\phi$ is that $\phi(y):=\rho(g)(a)$, where $g$ is any diffeomorphism in $G_{U_k}$ such that $g(x) = y$.  Our argument above shows this is independent of choice of $g$.  

\paragraph{Step 2: local continuity of $\phi$ on $U_k$}
We first show that $\phi$ is continuous at $x$.  Suppose $x_n \to x$ is a convergent sequence.  Passing to a subsequence and reindexing if needed, we may assume that $x_n \in U_n$.  Then we may take $g \in G_{U_n}$ so that $g(x) = x_n$, so $\phi(x_n) = \rho(g)(x)$.  Since the sequence of connected components of $\Osupp(\rho(G_{U_n}))$ containing $x$ converges to $x$, we get that $\phi(x_n) \to x$.   

Now we show continuity on $U_k$.  {Now take any point $x' \in U_k$, and a sequence $x'_n \to x$ in $U_k$.  There exists $g \in G_{U_k}$ such that $g(x)=x'$ so $g^{-1}(x'_n)$ is a sequence converging to $x$.  It follows from continuity at $x$ that $\phi(g^{-1}(x'_n))$ converges to $\phi(x)$.  
By definition, $\rho(g)\phi(g^{-1}(x'_n)) = \phi(x'_n)$, so we conclude that $\phi(x'_n)$ converges to $\phi(x)$.  }

Note also that $\phi$ is injective by Lemma \ref{lem:disjoint2}.  Thus, by invariance of domain, we conclude that $M$ is one-dimensional so equal to $\R$ or $S^1$, and $\phi$ gives a homeomorphism from $U_k$ onto an open neighborhood $A$ of $a$ in $N$.  In particular, this shows that $a$ is an isolated point of $S_x$.  

\paragraph{Step 3: extension of $\phi$ globally}

The last step is to show that $\phi$ extends naturally to a globally defined homeomorphism $M \to N$.
Note first that the orbit of $A$ under $\rho(G)$ is an open, $\rho(G)$-invariant set, so by Corollary \ref{cor:countable_index}, $\rho(G)(A) = N$.

This topological transitivity implies that, for all $x$, every point of $S_x$ is an isolated point, and the map $S_x \mapsto x$ is a covering map, and it is equivariant with respect to $\rho$ since $\rho(g)(S_x) = S_{g(x)}$.  In the case $M = S^1$, it follows that $N = S^1$ and the cover is degree one, as can be seen by considering the subgroup of rotations.  In the case $M = \R$ we immediately have that $N = \R$.  

\end{proof}

    	\bibliography{citing}{}
	\bibliographystyle{plain}
\end{document}